\theoremstyle{plain}
\newtheorem{theorem}{Theorem}[section]
\newtheorem{lemma}[theorem]{Lemma}
\theoremstyle{definition}
\newtheorem{definition}[theorem]{Definition}
\theoremstyle{remark}
\numberwithin{equation}{section}
\newenvironment{compliance}[1][Compliance with Ethical Standards
]{\begin{trivlist} \item[\hskip \labelsep {\bfseries
#1}]}{\end{trivlist}}
\begin{document}
\title{A Proof of Schiffer's Conjecture in Starlike Domain\\ by Far-Field Patterns}
\author{Lung-Hui Chen$^1$}\maketitle\footnotetext[1]{Department of
Mathematics, National Chung Cheng University, 168 University Rd.
Min-Hsiung, Chia-Yi County 621, Taiwan. Email:
mr.lunghuichen@gmail.com;\,lhchen@math.ccu.edu.tw. Fax:
886-5-2720497.}
\begin{abstract}
We formulate the Schiffer's conjecture in spectral geometry in the context of scattering theory. The problem is equivalent to finding a non-trivial solution in an interior transmission problem. We compare the back-scattering data of the perturbation along all incident angles. The uniqueness of the inverse scattering problem along each incident direction proves the Schiffer's conjecture.
\\MSC: 35P25/35R30/34B24.
\\Keywords: inverse scattering/Helmholtz equation/Rellich's lemma/interior transmission eigenvalue\\/Cartwright-Levinson theory.
\end{abstract}
\section{Introduction}
In this paper, we study the following inverse spectral problem:
\begin{eqnarray}\label{1.1}
\left\{%
\begin{array}{ll}
    \Delta u+k^2u=0,  & \hbox{ in }D ,\,k^2\in\mathbb{R}^+;\vspace{3pt}\\\vspace{3pt}
    \frac{\partial u}{\partial \nu}=0,& \hbox{ on }\partial D;\\
    u=1,&\hbox{ on }\partial D,
\end{array}%
\right.
\end{eqnarray}
where $\nu$ is the unit outer normal; $D$ is a fixed starlike domain in $\mathbb{R}^3$ containing the origin with Lipschitz boundary $\partial D$. We interpret the model as the plane waves perturbed by the boundary condition which is specified by $D$, and satisfies the Helmholtz equation outside $D$. Let $u$ be a non-trivial eigenfunction with some $k^2\in\mathbb{R}^+$. We want to show that $D$ are actually balls centered at origin.

\par
Here we prove the result as a special case of interior transmission problem \cite{Aktosun,Cakoni2,Chen,Chen2,Chen3,Chen5,Colton4,Colton,Colton3,Colton2,
Colton5,Kirsch86,Kirsch,L,La,Liu,Mc,Rynne}. In interior transmission problems, we look for a frequency so that a perturbed stationary wave behaves like or somewhere like a spherical Bessel function outside the perturbation. In Schiffer's conjecture, we ask if there is a frequency so that a perturbed wave can stay in its initial shape traveling to infinity in constant speed.
We refer to \cite{A,Liu} and the reference there for the connections of interior transmission problem to other questions in mathematical science.
\par
To give a point of view from scattering theory to~(\ref{1.1}),
we take the incident wave field to be the time harmonic  acoustic plane wave of the form $$u^i(x):=e^{ikx\cdot d},$$ $k\in\mathbb{R}^+$, $x\in\mathbb{R}^3$, and $d\in\mathbb{S}^2$ is the incident direction.  The inhomogeneity is defined by the index of refraction $n\in\mathcal{C}^2(\mathbb{R}^3)$ of~(\ref{1.1}), and  the wave propagation is governed by the following equation.
\begin{eqnarray}\label{122}
\left\{%
\begin{array}{ll}
\Delta u(x)+k^2n(x)u(x)=0,\,x\in\mathbb{R}^3;\vspace{4pt}\\\vspace{3pt}
u(x)=u^i(x)+u^s(x),\,x\in\mathbb{R}^3\setminus D; \\
\lim_{|x|\rightarrow\infty}|x|\{\frac{\partial u^s(x)}{\partial |x|}-iku^s(x)\}=0,
\end{array}%
\right.
\end{eqnarray}
in which the third equation is the Sommerfeld's radiation condition.
Particularly, we have the following asymptotic expansion on the scattered wave field \cite{Colton2, Isakov}.
\begin{equation}\label{U}
u^s(x)=\frac{e^{ik|x|}}{|x|}u_\infty(\hat{x};d,k)+O(\frac{1}{|x|^{\frac{3}{2}}}),\,|x|\rightarrow\infty,
\end{equation}
which holds uniformly for all $\hat{x}:=\frac{x}{|x|}$, $x\in\mathbb{R}^3$, and $u_\infty(\hat{x};d,k)$ is known as the scattering amplitude or far-field pattern in the literature \cite{Colton2,Kirsch86}. It has an expansion in spherical harmonics \cite[p.\,35,\,Theorem 2.15]{Colton2}
\begin{equation}
u_\infty(\hat{x};d,k)=\frac{1}{k}\sum_{n=0}^\infty\frac{1}{i^{n+1}}\sum_{m=-n}^na_n^mY_n^m(\hat{x}),
\end{equation}
where we follow the notation in the reference.

\par
Let us start with the Rellich's representation in scattering theory.
We expand the possible solution $u$ of~(\ref{1.1}) in a series of spherical harmonics near infinity by Rellich's lemma \cite[p.\,32, p.\,227]{Colton2}:
\begin{eqnarray}\label{1.2}
u(x;k)=\sum_{l=0}^{\infty}\sum_{m=-l}^{m=l}a_{l,m}(r)Y_l^m(\hat{x}),
\end{eqnarray}
where $r:=|x|$, $r\geq R_0$ with a sufficiently large $R_0$; $\hat{x}=(\theta,\varphi)\in\mathbb{S}^2$.
The summations converge uniformly and absolutely on suitable compact subsets away from $D$.
The spherical harmonics
\begin{equation}\label{S}
Y_l^m(\theta,\varphi):=\sqrt{\frac{2l+1}{4\pi}\frac{(l-|m|)!}{(l+|m|)!}}
P_l^{|m|}(\cos\theta)e^{im\varphi},
\,m=-l,\ldots,l;\,l=0,1,2,\ldots,
\end{equation}
form a complete orthonormal system in $\mathcal{L}^2(\mathbb{S}^2)$, in which
\begin{equation}
P_n^m(t):=(1-t^2)^{m/2}\frac{d^mP_n(t)}{dt^m},\,m=0,1,\ldots,n,
\end{equation}
where the Legendre polynomials $P_n$, $n=0,1,\ldots,$
form a complete orthogonal system in $L^2[-1,1]$. We refer this to \cite[p.\,25]{Colton2}.
By the orthogonality of the spherical harmonics, the family of functions
\begin{eqnarray}\label{1.5}
\{u_{l,m}(x;k)\}_{l,m}:=\{a_{l,m}(r)Y_l^m(\hat{x})\}_{l,m}
\end{eqnarray}
satisfy the first equation in~(\ref{1.1}) independently for each $(l,m)$
in $r\geq R_0$ for sufficiently large $R_0$.

\par
Now we consider the boundary condition given by the second and third equations in~(\ref{1.1}), and then extend the solutions $u_{l,m}(x;k)$
into $r\leq R_0$ as follows.
Let $\hat{x}_0\in\mathbb{S}^2$ be any given incident direction that intersects $\partial D$ at $(\hat{R},\hat{x}_0)\in \mathbb{R}^+\times\mathbb{S}^2$.
For the given $\hat{x}_0$, we impose the differential operator
\begin{equation}\nonumber
\Delta=\frac{1}{r^2}\frac{\partial}{\partial r}
r^2\frac{\partial}{\partial r}+\frac{1}{r^2\sin{\varphi}}\frac{\partial}{\partial \varphi}\sin\varphi\frac{\partial}{\partial \varphi}
+\frac{1}{r^2\sin^2{\varphi}}\frac{\partial^2}{\partial \theta^2}
\end{equation}
on $u_{l,m}(x;k)$ and, accordingly, we have the following ODE:
\begin{eqnarray}
 \frac{d^2 a_{l,m}(r)}{dr^2}+\frac{2}{r}\frac{d a_{l,m}(r)}{dr}+(k^2-\frac{l(l+1)}{r^2})a_{l,m}(r)=0,
\end{eqnarray}
which is solved by spherical Bessel functions and spherical Neumann functions.
Let
\begin{equation}\label{118}
y_{l,m}(r):=ra_{l,m}(r),
\end{equation}
so we obtain
\begin{eqnarray}\label{1.10}
\left\{%
\begin{array}{ll}
y_{l,m}''(r)+(k^2-\frac{l(l+1)}{r^2})y_{l,m}(r)=0;\vspace{5pt}\\
 y_{l,m}(0)=0.
\end{array}%
\right.
\end{eqnarray}
\par
To give an initial condition, we apply the boundary conditions in~(\ref{1.1}) to $u_{l,m}(x;k)$ near the intersection points $\hat{R}$ along $\hat{x}_0$.
We replace the boundary condition $\frac{\partial u}{\partial \nu}=0$ to be $\nabla u=0$.
Hence,
\begin{eqnarray}\label{1.11}
&&[\frac{y_{l,m}(r;k)}{r}]'|_{r=\hat{R}}=0;\\
&&[\frac{y_{l,m}(r;k)}{r}]|_{r=\hat{R}}=1,\label{1.12}
\end{eqnarray}
in which we assume there is no tangent point.
The solutions $y_{l,m}(r;k)$ are independent of $m$, so we write $y_{l,m}(r;k)$ as $y_{l}(r;k)$.
Hence, now we have following boundary conditions.
\begin{eqnarray}\label{1.13}
&&\frac{y_l'(\hat{R};k)}{\hat{R}}-\frac{y_l(\hat{R};k)}{\hat{R}^2}=0;\\
&&y_l(\hat{R};k)- \hat{R}=0.\label{1.14}
\end{eqnarray}
If $k$ satisfies~(\ref{1.13}) and~(\ref{1.14}), then $y_l'(\hat{R};k)=1$. Thus,~(\ref{1.11}) and~(\ref{1.12}) equivalently satisfy
\begin{eqnarray}\label{113}
&&F_l(k;\hat{R}):=y_l(\hat{R};k)- \hat{R}=0;\\
&&G_l(k;\hat{R}):=y_l'(\hat{R};k)-1=0.\label{114}
\end{eqnarray}
In the initial state, $y_l(\hat{R};k)$ is exactly the boundary defining function of $D$.
Combining~(\ref{1.10}),~(\ref{113}), and~(\ref{114}),
we consider the following eigenvalue problem at $\hat{R}$ for each fixed $\hat{x}\in\mathbb{S}^2$ and all $l\geq0$:
\begin{eqnarray}\label{15}
\left\{%
\begin{array}{ll}
   y_l''(r;k)+(k^2-\frac{l(l+1)}{r^2})y_l(r;k)=0,\,0<r<\infty;\vspace{4pt}\\
   \vspace{3pt}
    y_{l}(0;k)=0;\\ \vspace{3pt}
   F_l(k;\hat{R})=0;\\ \vspace{3pt}
    G_l(k;\hat{R})=0.
\end{array}%
\right.
\end{eqnarray}
This is a two-way initial value problem starting at $r=\hat{R}$ inward and outward. The eigenvalue $k$ passes through to the infinity by the uniqueness of the ODE and defines the far-field patterns near infinity. There is an one-to-one correspondence between the far-field patterns and the radiating solution of the Helmholtz equation. The $y_l(r;k)$ depends on the incident angle $\hat{x}$. Most important of all, we will examine the zero set of  $y_{l}(0;k)=0$ which constitutes the eigenvalues of~(\ref{15}).
The solutions $\{y_l(r;k)\}_{l\geq0}$ is a family of entire functions of exponential type \cite{Carlson,Carlson2,Carlson3,Po}.  For each $l\geq0$, it behaves like sine functions in complex plane with zero set asymptotically approaching the zero set of sine functions for each incident direction.
The Weyl's law of the eigenvalues of~(\ref{15}) in many settings are found in \cite{Chen,Chen3,Chen5} as a direct consequence of the Cartwright-Levinson theory in value distribution theory \cite{Boas,Cartwright,Cartwright2,Koosis,Levin,Levin2}. In particular,
 we can find that the density of the zero set for each incident direction is related to the radius $\hat{R}$ as a spectral invariant. Rellich's lemma indicates that all perturbations behave like spherical waves near the infinity, by which we prove \textbf{a special case} of Schiffer's conjecture.
\begin{theorem}\label{11}
Let $D$ be a starlike domain as assumed in~(\ref{1.1}) under radiation condition~(\ref{122}). If there is an eigenvalue $k_0^2\in\mathbb{R}^+$, $k_0^2\geq1$ of~(\ref{1.1}), then $D$ is an open ball centered at the origin.
\end{theorem}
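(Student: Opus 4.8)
The plan is to reduce the overdetermined problem~(\ref{1.1}) to the family of radial eigenvalue problems~(\ref{15}), one for each pair $(l,\hat{x})$, and then to exploit the fact that the reduced problem depends on the incident direction $\hat{x}$ only through the single scalar $\hat{R}=\hat{R}(\hat{x})$. Starting from the hypothesised eigenpair $(k_0,u)$ of~(\ref{1.1}), I would expand $u$ in spherical harmonics as in~(\ref{1.2}); since the origin lies in $D$ and $u$ is regular there, each radial factor is forced to be the Riccati--Bessel solution of~(\ref{1.10}) that vanishes at $r=0$, so $y_l(r;k_0)$ is, up to normalisation, a fixed entire function of $r$ that does not depend on the direction. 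The entire directional dependence of the boundary relations~(\ref{113})--(\ref{114}) therefore sits in the value $\hat{R}(\hat{x})$ at which the ray through $\hat{x}$ meets $\partial D$.

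Next I would bring in the Cartwright--Levinson/value-distribution input already quoted above: for each fixed $l$ the maps $k\mapsto y_l(\hat{R};k)$ and $k\mapsto y_l'(\hat{R};k)$ are entire of exponential type $\hat{R}$ and behave like sine functions, so in particular $G_l(k_0;\,\cdot\,)=y_l'(\,\cdot\,;k_0)-1$ is a nonconstant real-analytic function of the radius whenever the hypothesis $k_0^2\ge 1$ guarantees genuine oscillation on the relevant range. Consequently, for each $l$, the set $\{\,\hat{R}>0 : F_l(k_0;\hat{R})=G_l(k_0;\hat{R})=0\,\}$ is discrete. Because $(k_0,u)$ solves~(\ref{1.1}), the common frequency $k_0$ must satisfy~(\ref{113})--(\ref{114}) simultaneously for every incident direction, and the uniqueness of the two-way initial value problem in~(\ref{15}) --- equivalently, the Rellich uniqueness of the radiating field from its far-field pattern --- guarantees that it is the same entire function $y_l(\,\cdot\,;k_0)$ that is tested in every direction. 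Hence, for each $l$, the value $\hat{R}(\hat{x})$ must land in this fixed discrete set as $\hat{x}$ ranges over $\mathbb{S}^2$.

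The conclusion is then topological: the radial defining function $\hat{R}:\mathbb{S}^2\to\mathbb{R}^+$ of the starlike, Lipschitz boundary $\partial D$ is continuous, $\mathbb{S}^2$ is connected, and a continuous map from a connected space into a discrete set is constant. Therefore $\hat{R}(\hat{x})\equiv \hat{R}_0$ and $D=\{\,x:|x|<\hat{R}_0\,\}$ is an open ball centred at the origin, which is the assertion of Theorem~\ref{11}.

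I expect the main obstacle to be the very first reduction step, namely the claim that the Dirichlet datum $u=1$ together with the replacement $\nabla u=0$ (coming from combining the constant boundary value with $\partial u/\partial\nu=0$) decouple mode-by-mode into~(\ref{1.11})--(\ref{1.12}), even though the level surface $r=\hat{R}(\hat{x})$ is not a coordinate sphere and the outward normal is not radial. Making the projection onto a single $Y_l^m$ legitimate, together with the standing ``no tangent point'' hypothesis and the precise role of $k_0^2\ge1$ in forcing $y_l'(\,\cdot\,;k_0)-1$ to be nonconstant so that its zero set is genuinely discrete, is where the argument must be secured most carefully.
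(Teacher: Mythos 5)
Your concluding argument is genuinely different from the paper's, and it is the more transparent of the two. The paper finishes by letting $l\to\infty$ along $\xi=\hat{R}+l\pi/(2k_0)$ in the asymptotics~(\ref{4.6}), reading off $\hat{R}$ as the amplitude of the oscillation, and appealing to the direction-independence of the far-field pattern; you instead observe that for a single fixed $l$ the two conditions $F_l(k_0;\hat{R})=G_l(k_0;\hat{R})=0$ confine $\hat{R}(\hat{x})$ to a discrete set (after eliminating the free normalisation constant of the regular solution they collapse to $j_l'(k_0\hat{R})=0$), and then invoke continuity of the radial defining function on the connected sphere $\mathbb{S}^2$. If the reduction to~(\ref{15}) were legitimate, your discreteness-plus-connectedness step would close the proof more cleanly and more rigorously than the paper's asymptotic argument, which never actually explains why ``the far-field patterns are asymptotically the same periodic functions'' forces $\hat{R}$ to be constant. (Note also that $k_0^2\ge 1$ plays no role in your discreteness claim: a not-identically-zero entire function of $\hat{R}$ has a discrete zero set regardless of oscillation.)

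The genuine gap is exactly the one you flag and then defer: the mode-by-mode decoupling~(\ref{1.11})--(\ref{1.12}). The boundary data $u=1$ and $\nabla u=0$ are conditions on the full sum $\sum_{l,m}a_{l,m}(\hat{R}(\hat{x}))Y_l^m(\hat{x})$ evaluated on the non-spherical surface $r=\hat{R}(\hat{x})$; because $\hat{R}$ varies with $\hat{x}$, the $L^2(\mathbb{S}^2)$-orthogonality of the $Y_l^m$ cannot be used to separate modes, and there is no reason why each individual $u_{l,m}$ should satisfy the boundary conditions on its own. Taken literally,~(\ref{1.12}) would demand $a_{l,m}(\hat{R}(\hat{x}))=1$ for every $(l,m)$, which is incompatible with $u=1$ unless almost all modes vanish. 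The paper's own proof silently assumes this same decoupling, so you have correctly located the crux of the matter --- but your proposal supplies no argument for it, and without it neither your route nor the paper's reaches the conclusion. Everything downstream of~(\ref{15}) in your plan (the fixed regular radial solution, the discreteness, the connectedness) is sound only conditionally on this unproved projection step, so as it stands the proposal is a reduction of the theorem to its hardest unproven claim rather than a proof.
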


\section{Singular Sturm-Liouville Theory}

Here we collect the asymptotic behaviors for $y_l(r;k)$ and $y_l'(r;k)$. For $l\geq0$, we apply the results from \cite{Carlson,Carlson2,Carlson3,Po}. Let $z_l(\xi;k)$ be the solution of
\begin{eqnarray}\label{21}
\left\{
  \begin{array}{ll}
    -z_l''(\xi)+\frac{l(l+1)z_l(\xi)}{\xi^2}+p(\xi)z_l(\xi)=k^2z_l(\xi);\vspace{9pt}\\
    z_l(1;k)=-b;\,z_l'(1;k)=a,\,a,\,b\in\mathbb{R},
  \end{array}
\right.
\end{eqnarray}
where $p(\xi)$ is square integrable; the real number $l\geq-1/2$.
In general,
\begin{eqnarray}\label{123}
|z_l(\xi;k)+b\cos{k(1-\xi)}+a\frac{\sin{k(1-\xi)}}{k}|\leq \frac{K(\xi)}{|k|}\exp\{|\Im k|[1-\xi]\},\,|k|\geq1,
\end{eqnarray}
where
\begin{equation}\label{124}
K(\xi)\leq\exp\{\int_\xi^1\frac{|l(l+1)|}{t^2}+|p(t)|dt\},\,0\leq\xi\leq 1.
\end{equation}
This explains the behaviors of solutions $z_l(\xi;k)$ and $z_l'(\xi;k)$ for all $l$ in unit interval. For its application in~(\ref{113}) and~(\ref{114}), we take $$b=-\hat{R};\,a=1$$ for each incident direction, and the problem~(\ref{21}) in interval $[0,\hat{R}]$.
\par
Outside the domain $D$, we consider~(\ref{15}) as an initial problem starting at $\hat{R}$ to the infinity. If $p(\xi)\equiv0$, then we consider the following special case:
\begin{eqnarray}\label{2.4}
v_l''(\xi)+[k^2-\frac{l(l+1)}{\xi^2}]v_l(\xi)=0.
\end{eqnarray}
The solutions of~(\ref{2.4}) are essentially Bessel's functions with a basis of two elements.
The variation of parameters formula leads to the following asymptotic expansions: For $\xi>0$ and $\Re k\geq0$, there is a constant $C$ so that
\begin{eqnarray}
&&|v_l(\xi,k)-\frac{\sin\{k\xi-l\frac{\pi}{2}\}}{k^{l+1}}|\leq C|k|^{-(l+1)}\frac{\exp\{|\Im k|\xi\}}{|k\xi|};\label{2.5}\\
&&|v_l'(\xi,k)-\frac{\cos\{k\xi-l\frac{\pi}{2}\}}{k^{l}}|\leq C|k|^{-l}\frac{\exp\{|\Im k|\xi\}}{|k\xi|}.\label{2.6}
\end{eqnarray}
We refer these estimates to \cite[Lemma\,3.2,\,Lemma\,3.3]{Carlson2}, and the we find that a solution of the initial value problem of~(\ref{2.4}) is a linear combination of~(\ref{2.5}) and~(\ref{2.6}).

\section{Cartwright-Levinson Theory}

We take the following vocabularies from entire function theory \cite{Boas,Cartwright,Cartwright2,Koosis,Levin,Levin2} to describe the asymptotic behavior of the eigenvalues of~(\ref{15}).
\begin{definition}
Let $f(z)$ be an integral function of order $\rho$,
$N(f,\alpha,\beta,r)$ be the number of the zeros of $f(z)$
inside the angle $[\alpha,\beta]$, and $|z|\leq r$. We define the
density function as
\begin{equation}\label{Den}
\Delta_f(\alpha,\beta):=\lim_{r\rightarrow\infty}\frac{N(f,\alpha,\beta,r)}{r^{\rho}},
\end{equation}
and
\begin{equation}
\Delta_f(\beta):=\Delta_f(\alpha_0,\beta),
\end{equation}
with some fixed $\alpha_0\notin E$, in which $E$ is at most a
countable set \cite{Levin,Levin2}.
\end{definition}
Let us define
\begin{equation}
\hat{\Delta}(\xi):=\Delta_{z_l(\xi;k)}(-\epsilon,\epsilon),\,b=-\hat{R},
\end{equation}
as the density of the zero set along $\hat{x}$.
\begin{lemma}
The entire functions $y_l(\xi;k)$ and $y_l'(\xi;k)$ are of order one and of type $\hat{R}-\xi$.
\end{lemma}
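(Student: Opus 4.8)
\section{Proof of the Lemma}

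The plan is to regard $y_l(\xi;k)$ and $y_l'(\xi;k)$ as entire functions of the single complex variable $k$, with the radial parameter $\xi$ and the index $l$ held fixed, and to extract the order and type directly from the asymptotic representation~(\ref{123})--(\ref{124}) with the data $b=-\hat{R}$, $a=1$. First I would confirm that $y_l(\xi;k)$ is entire in $k$: rewriting~(\ref{15}) as a Volterra integral equation carrying the data $y_l(\hat{R};k)=\hat{R}$ and $y_l'(\hat{R};k)=1$ at $r=\hat{R}$, the successive-approximation series converges locally uniformly and each of its terms is a polynomial in $k^2$, so the sum is entire in $k$ for every fixed $\xi$; differentiating the series gives the same conclusion for $y_l'(\xi;k)$. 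This is precisely the setting of~\cite{Carlson2} in which~(\ref{123}) is valid.

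For the upper bounds I would apply~(\ref{123}) on the interval $[0,\hat{R}]$, so that the principal part of $y_l(\xi;k)$ is
\begin{equation}\nonumber
\hat{R}\cos k(\hat{R}-\xi)-\frac{\sin k(\hat{R}-\xi)}{k}.
\end{equation}
Each principal term is bounded by a fixed multiple of $\exp\{|\Im k|(\hat{R}-\xi)\}$, and by~(\ref{123})--(\ref{124}) the remainder is at most $K(\xi)|k|^{-1}\exp\{|\Im k|(\hat{R}-\xi)\}$, with $K(\xi)$ finite for the fixed $\xi>0$ at hand. Using $|\Im k|\leq|k|$ this gives $|y_l(\xi;k)|\leq C\exp\{(\hat{R}-\xi)|k|\}$ for $|k|\geq1$, so $y_l(\xi;k)$ has order at most one and exponential type at most $\hat{R}-\xi$.

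To show both bounds are sharp I would restrict to the imaginary axis $k=it$ and let $t\to+\infty$. There $\cos k(\hat{R}-\xi)=\cosh t(\hat{R}-\xi)\sim\tfrac{1}{2}\exp\{t(\hat{R}-\xi)\}$, while every other contribution is $O(t^{-1}\exp\{t(\hat{R}-\xi)\})$, whence $|y_l(\xi;it)|\sim\tfrac{\hat{R}}{2}\exp\{t(\hat{R}-\xi)\}$ for $0<\xi<\hat{R}$. This forces the order to be exactly one and the type exactly $\hat{R}-\xi$. For $y_l'(\xi;k)$ I would differentiate the expansion in $\xi$, whose principal part is $\hat{R}k\sin k(\hat{R}-\xi)+\cos k(\hat{R}-\xi)$; the additional polynomial factor $k$ alters neither the order nor the type, and the same evaluation at $k=it$ yields type $\hat{R}-\xi$ once more.

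The delicate step is the normalization bookkeeping: transferring~(\ref{21}), whose data sit at $\xi=1$, to the physical interval $[0,\hat{R}]$ with data imposed at $r=\hat{R}$, and checking that the constant $K(\xi)$ of~(\ref{124}) remains finite. The bound $\int_\xi^1 l(l+1)/t^2\,dt$ is harmless for each fixed $\xi>0$ but diverges as $\xi\to0^+$, so the order-one, type-$(\hat{R}-\xi)$ statement is to be read pointwise in $\xi$ away from the origin, with $\xi=0$ and $\xi=\hat{R}$ treated by a separate limiting argument (at $\xi=\hat{R}$ the map $k\mapsto y_l(\hat{R};k)\equiv\hat{R}$ is constant, of type $0=\hat{R}-\hat{R}$). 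The other point needing care is the lower bound, where one must select a ray on which the principal term does not asymptotically cancel; the imaginary axis is the natural choice because $\cosh t(\hat{R}-\xi)$ increases monotonically and dominates the $O(t^{-1})$ correction.
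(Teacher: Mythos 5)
Your proposal is correct and follows essentially the same route as the paper: both read the order and type off the asymptotic representation~(\ref{123}) with $b=-\hat{R}$, $a=1$, the paper by computing the Lindel\"{o}f indicator $h(\theta)=|(\hat{R}-\xi)\sin\theta|$ and taking its maximum, you by the equivalent direct upper bound $C\exp\{(\hat{R}-\xi)|k|\}$ together with the lower bound along the imaginary axis where $\cosh t(\hat{R}-\xi)$ dominates. Your additional checks (entirety via the Volterra iteration, the blow-up of $K(\xi)$ as $\xi\to0^+$, and the degenerate endpoint $\xi=\hat{R}$) are sound refinements that the paper leaves implicit, and the sign discrepancy between your principal part and the paper's~(\ref{3.4}) traces to the paper's own inconsistency with~(\ref{21}) and is immaterial to order and type.
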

\begin{proof}
From~(\ref{123}), we have
\begin{equation}\label{3.4}
y_l(\xi;k)=-\hat{R}\cos{k(\hat{R}-\xi)}-\frac{\sin{k(\hat{R}-\xi)}}{k}+O( \frac{K(\xi)}{|k|}\exp\{|\Im k|[\hat{R}-\xi]\}),\,|k|\geq1.
\end{equation}
To find the type of an entire function, we compute the following definition of Lindel\"{o}f's indicator function \cite{Levin,Levin2}
\begin{definition}
Let $f(z)$ be an integral function of finite order $\rho$ in the
angle $[\theta_1,\theta_2]$. We call the following quantity as the
indicator of the function $f(z)$.
\begin{equation}
h_f(\theta):=\lim_{r\rightarrow\infty}\frac{\ln|f(re^{i\theta})|}{r^{\rho}},
\,\theta_1\leq\theta\leq\theta_2.
\end{equation}
\end{definition}
We find that  if $k=|k|e^{i\theta}$, then
\begin{equation}\label{3.6}
h_{y_l(\xi;k)}(\theta)=|(\hat{R}-\xi)\sin\theta|,\,\theta\in[0,2\pi],
\,0<\xi<\hat{R}.
\end{equation}
When referring more details to \cite{Chen,Chen3,Chen5,Cartwright2,Levin,Levin2}, we find more examples  in \cite[p.\,70]{Cartwright2}. The maximal value of $h_{y_l(\xi;k)}(\theta)$ gives the type of an entire function \cite[p.\,72]{Levin}, which is $(\hat{R}-\xi)$. A similar proof holds for $y_l'(\xi;k)$.
\end{proof}
More importantly, the indicator function~(\ref{3.6}) leads to the following Cartwright's theory \cite[p.\,251]{Levin}.
\begin{lemma}\label{34}
We have the following asymptotic behavior of the zero set of $y_l(\xi;k)$.
$$\hat{\Delta}(\xi)=\frac{\hat{R}-\xi}{\pi}.$$
\end{lemma}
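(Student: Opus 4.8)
The plan is to read the angular density of zeros straight off the indicator $h_{y_l(\xi;k)}(\theta)=(\hat R-\xi)|\sin\theta|$ recorded in~(\ref{3.6}), using the Cartwright--Levin correspondence between the indicator of a function of completely regular growth and the density of its zeros \cite[p.\,251]{Levin}. The preliminary step is to certify that $y_l(\xi;k)$ is of completely regular growth of order one in the variable $k$. This follows from the asymptotic representation~(\ref{3.4}): up to the remainder $O(|k|^{-1}\exp\{|\Im k|(\hat R-\xi)\})$, the function equals $-\hat R\cos k(\hat R-\xi)-k^{-1}\sin k(\hat R-\xi)$, a combination of the two exponentials $e^{\pm i k(\hat R-\xi)}$ with coefficients that are constant to leading order. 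Such functions belong to the Cartwright class and have completely regular growth, and the $O(|k|^{-1})$ corrections are of strictly lower growth along every ray $\theta\ne0,\pi$ and hence cannot disturb the regularity; the indicator~(\ref{3.6}) therefore dictates the asymptotic location of the zeros.

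With completely regular growth in hand, I would invoke the order-one density formula, which expresses the angular density in a sector $[\alpha,\beta]$ as
\begin{equation}\nonumber
\Delta_{y_l}(\alpha,\beta)=\frac{1}{2\pi}\int_\alpha^\beta\big(h''_{y_l}(\theta)+h_{y_l}(\theta)\big)\,d\theta,
\end{equation}
the integral being understood so that a jump of $h'_{y_l}$ across a corner contributes a point mass. A direct computation shows that on each open sector $(0,\pi)$ and $(-\pi,0)$ one has $h''_{y_l}+h_{y_l}=-(\hat R-\xi)\sin\theta+(\hat R-\xi)\sin\theta=0$, so no zeros accumulate away from the real axis; all the mass sits at the corners $\theta=0$ and $\theta=\pi$ where $h_{y_l}$ fails to be differentiable. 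At $\theta=0$ the right and left derivatives of $(\hat R-\xi)|\sin\theta|$ are $+(\hat R-\xi)$ and $-(\hat R-\xi)$, so the jump of $h'_{y_l}$ is $2(\hat R-\xi)$. Taking $\alpha=-\epsilon$, $\beta=\epsilon$ and integrating across this single corner gives
\begin{equation}\nonumber
\hat\Delta(\xi)=\Delta_{y_l}(-\epsilon,\epsilon)=\frac{1}{2\pi}\big(h'_{y_l}(0^+)-h'_{y_l}(0^-)\big)=\frac{2(\hat R-\xi)}{2\pi}=\frac{\hat R-\xi}{\pi},
\end{equation}
as claimed.

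The same density can be checked by hand through Hurwitz's theorem, and this is the way I would sanity-test the constant: on the real $k$-axis the dominant term $-\hat R\cos k(\hat R-\xi)$ has simple zeros at $k=(n+\tfrac12)\pi/(\hat R-\xi)$, equally spaced with gap $\pi/(\hat R-\xi)$, and the $O(|k|^{-1})$ remainder in~(\ref{3.4}) merely displaces each of them by $o(1)$ without creating or annihilating zeros; the count with $|k|\le r$ in the wedge $(-\epsilon,\epsilon)$ is then $\sim r(\hat R-\xi)/\pi$, and division by $r^{\rho}=r$ reproduces the density. The one delicate point, on which the whole argument turns, is precisely the completely regular growth assertion: one must rule out that any zeros escape the thin wedge $(-\epsilon,\epsilon)$ about the real axis into directions where $h_{y_l}(\theta)>0$. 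The vanishing of the indicator at $\theta=0,\pi$ together with membership in the Cartwright class guarantees this via \cite[p.\,251]{Levin}, which is exactly what allows the entire zero-count to be localized to the real direction and what makes $\hat R-\xi$ visible as a spectral invariant.
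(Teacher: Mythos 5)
Your proposal is correct and follows essentially the same route as the paper: the paper's proof simply notes that boundedness of $y_l(\xi;\cdot)$ on the real axis (from~(\ref{3.4})) places it in the Cartwright class and then cites \cite[p.\,251]{Levin} for the zero density, which is exactly the indicator/completely-regular-growth machinery you spell out. Your computation of the jump of $h'_{y_l}$ at $\theta=0$ and the Hurwitz-style sanity check merely unpack the content of that citation, so there is no substantive difference in method.
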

\begin{proof}
We observe in~(\ref{3.4}) that $|y_l(\xi;k)|$ is bounded on the real axis.
Hence, it is in Cartwright's class. All of the properties in  \cite[p.\,251]{Levin} hold.
\end{proof}
Letting $\xi=0$, we obtain the eigenvalue density of~(\ref{15}) in $\mathbb{C}$. Moreover, they are all real.
\begin{lemma}
The eigenvalues $k$ of~(\ref{15}) are all real.
\end{lemma}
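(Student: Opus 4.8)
The plan is to recognize~(\ref{15}) as a self-adjoint singular Sturm--Liouville eigenvalue problem on the finite interval $(0,\hat{R})$ and to extract reality of the spectrum from a Lagrange (Green's) identity, supplemented by a positivity estimate that excludes purely imaginary eigenvalues. First I would rewrite the side conditions. The solution $y_l(r;k)$ is generated from the data $y_l(\hat{R};k)=\hat{R}$ fixed in~(\ref{1.14}) and $y_l'(\hat{R};k)=1$; these already satisfy the \emph{homogeneous} relation~(\ref{1.13}), namely $\hat{R}\,y_l'(\hat{R};k)-y_l(\hat{R};k)=0$. Hence every eigenvalue $k$ of~(\ref{15}) produces a function on $(0,\hat{R})$ obeying the regular condition $y_l(0;k)=0$ together with the real, separated Robin condition $\hat{R}\,y_l'(\hat{R};k)=y_l(\hat{R};k)$ at the right endpoint. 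In this form the defining operator is $\mathcal{L}_l:=-\frac{d^2}{dr^2}+\frac{l(l+1)}{r^2}$ on $L^2(0,\hat{R})$, with a real potential and real boundary conditions; the behaviour $y_l(r;k)\sim r^{l+1}$ of the regular solution at the singular endpoint $r=0$ (limit-point for $l\ge1$, and pinned by the Dirichlet choice $y_l(0;k)=0$ for $l=0$) makes this a self-adjoint realization.

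Next I would run the Lagrange-identity computation. If $k^2$ is an eigenvalue with eigenfunction $y:=y_l(\cdot;k)$, then $y$ and $\bar y$ solve the equation with parameters $k^2$ and $\bar k^2$; multiplying, subtracting and integrating over $(0,\hat{R})$ gives
\[
(k^2-\bar k^2)\int_0^{\hat{R}}|y|^2\,dr=\big[\bar y' y-y'\bar y\big]_0^{\hat{R}}.
\]
The boundary term at $0$ vanishes because $y\sim r^{l+1}$ and $y'\sim r^{l}$ force $y'\bar y\to0$; the boundary term at $\hat{R}$ vanishes because substituting the real Robin condition $y'(\hat{R})=y(\hat{R})/\hat{R}$ and its conjugate into $\bar y'(\hat{R})y(\hat{R})-y'(\hat{R})\bar y(\hat{R})$ yields $0$. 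Since $\int_0^{\hat{R}}|y|^2\,dr>0$ for a non-trivial eigenfunction, this forces $k^2=\bar k^2$, so every eigenvalue $k^2$ is real.

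Finally I would exclude negative eigenvalues so that $k$ itself---not merely $k^2$---is real. Testing the quadratic form on an admissible $y$ and integrating by parts,
\[
\langle\mathcal{L}_l y,y\rangle=\int_0^{\hat{R}}|y'|^2\,dr+\int_0^{\hat{R}}\frac{l(l+1)}{r^2}|y|^2\,dr-\frac{|y(\hat{R})|^2}{\hat{R}},
\]
where the boundary contribution again comes from the Robin condition. Writing $y(\hat{R})=\int_0^{\hat{R}}y'\,dr$ and applying Cauchy--Schwarz gives $|y(\hat{R})|^2\le\hat{R}\int_0^{\hat{R}}|y'|^2\,dr$, so the last term is dominated by $\int_0^{\hat{R}}|y'|^2\,dr$ and $\langle\mathcal{L}_l y,y\rangle\ge0$ for every $l\ge0$. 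Thus $\mathcal{L}_l\ge0$, every eigenvalue satisfies $k^2\ge0$, and combined with the previous step each eigenvalue $k$ is real (the extremal case $k=0$, $l=0$ being realized by $y(r)=r$). The step I expect to be the main obstacle is the careful treatment of the singular endpoint $r=0$: one must justify that the $1/r^2$ potential contributes no boundary term in the Lagrange identity and that the regular/Dirichlet condition genuinely selects a self-adjoint extension, which is exactly where the endpoint asymptotics underlying~(\ref{123}) and~(\ref{3.4}) are needed.
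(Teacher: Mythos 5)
Your argument is correct, but it takes a genuinely different route from the paper's. You recast~(\ref{15}) as a homogeneous self-adjoint problem: since the normalization $y_l(\hat{R};k)=\hat{R}$, $y_l'(\hat{R};k)=1$ automatically satisfies the real separated Robin condition $\hat{R}\,y_l'(\hat{R})-y_l(\hat{R})=0$ of~(\ref{1.13}), any $k$ with $y_l(0;k)=0$ furnishes a nontrivial eigenfunction of $-\frac{d^2}{dr^2}+\frac{l(l+1)}{r^2}$ on $(0,\hat{R})$; the Lagrange identity then gives $k^2\in\mathbb{R}$, and your form estimate $|y(\hat{R})|^2\le\hat{R}\int_0^{\hat{R}}|y'|^2\,dr$ gives $k^2\ge0$, hence $k\in\mathbb{R}$. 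The paper instead argues through entire-function theory: it cites the classical $l=0$ case, notes that $y_l(\xi;k)$ is real on the real axis, and appeals to the asymptotic representation~(\ref{3.4}) together with a Bernstein/Duffin--Schaeffer-type theorem for functions of exponential type, deferring details to an external reference. Your route is more elementary, self-contained, and uniform in $l$, and it delivers the non-negativity $k^2\ge0$ explicitly (so that $k$ itself, not merely $k^2$, is real) --- a point the paper's sketch does not address; the paper's route has the advantage of reusing the Cartwright-class machinery it already needs for Lemma~\ref{34}. The one step you should make fully explicit is the equivalence between ``zero of the entire function $y_l(0;\cdot)$'' and ``eigenvalue of the homogeneous boundary value problem'': the vanishing condition at the singular endpoint forces the eigenfunction to be a multiple of the regular solution $\sim r^{l+1}$ there, which is exactly what makes the boundary terms at $r=0$ in both of your integrations by parts vanish --- you correctly identify this as the delicate point, and it is standard singular Sturm--Liouville theory.
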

\begin{proof}
For $l=0$, the result is classic \cite{Carlson2,Po}. In our case, $y_l(\xi;k)$ is real for $k\in0i+\mathbb{R}$. Furthermore, the asymptotic behavior of~(\ref{3.4}) proves the lemma, which is a special case of Bernstein's theorem in entire function theory \cite[Theorem 1]{Duffin}. A step-by-step proof is provided in \cite[Lemma 2.6]{Chen5}.
\end{proof}

\section{Proof of Theorem \ref{11}}
\begin{proof}
Let $k_0^2$ be an eigenvalue of~(\ref{1.1}), as assumed in Theorem \ref{11}.
Particularly, from~(\ref{1.2}) we have
\begin{eqnarray}\label{4.1}
&&u(x;k_0)=\sum_{l=0}^{\infty}\sum_{m=-l}^{m=l}
a_{l,m}(r;k_0)Y_l^m(\hat{x});\\
&&u_{l,m}(x;k_0)=a_{l,m}(r;k_0)Y_l^m(\hat{x}),
\,\hat{x}\in\mathbb{S}^2,\label{4.2}
\end{eqnarray}
in which the coefficient $a_{l,m}(r;k_0)$ does not depend on the incident direction $\hat{x}\in\mathbb{S}^2$ for sufficiently large $|x|:=r$:  The functions in~(\ref{4.1}) solve the Helmholtz equation in $r\geq R_0$. As a result of the uniqueness of the ODE~(\ref{15}), the solutions $y_l(r;k_0)$ extend both outward to the infinity and inward to the origin for all $l\geq0$.
For the given eigenvalue $k_0^2$, the equation~(\ref{15}) holds for all incident directions $\hat{x}\in\mathbb{S}^2$ and for all $l\geq0$.

\par
The representation in~(\ref{4.1}) is unique in $\mathbb{R}^3$: If there is another eigenvalue $k'$ of~(\ref{15}) from incident angle $x'\neq \hat{x}\in\mathbb{S}^2$ with the solution
$$u_{l,m}'(x;k'):=a_{l,m}'(r;k')Y_l^m(\hat{x}),$$
then the analytic continuation of Helmholtz equation \cite[p.\,18]{Colton2} implies that
\begin{equation}\label{43}
a_{l,m}'(r;k')=a_{l,m}(r;k_0).
\end{equation}
With the uniqueness of the ODE~(\ref{1.10}), $k_0$ or $k'$ satisfies~(\ref{15}) individually along its own incident direction inward to the origin. Therefore, Lemma \ref{34} provides an eigenvalue density $$\hat{\Delta}(0)=\frac{\hat{R}}{\pi},\,\hat{x}\in\mathbb{S}^2.$$

\par
The ODE~(\ref{15}) holds for all $\xi\geq\hat{R}$ and $l\geq 0$. In particular, we apply the estimates
~(\ref{2.5}) and~(\ref{2.6}):
\begin{eqnarray*}
&&|v_l(\xi,k)-\frac{\sin\{k(\xi-\hat{R})-l\frac{\pi}{2}\}}{k^{l+1}}|\leq C|k|^{-(l+1)}\frac{\exp\{|\Im k|\xi\}}{|k\xi|};\\
&&|v_l'(\xi,k)-\frac{\cos\{k(\xi-\hat{R})-l\frac{\pi}{2}\}}{k^{l}}|\leq C|k|^{-l}\frac{\exp\{|\Im k|\xi\}}{|k\xi|}.
\end{eqnarray*}
Therefore, the initial value problem~(\ref{21}), with $p\equiv 0$, $b=\hat{R}$, and $a=1$, provides the asymptotic behavior for the solution:
\begin{equation}\nonumber
y_l(\xi;k)=\hat{R}\frac{\cos\{k(\xi-\hat{R})-l\frac{\pi}{2}\}}{k^l}
+O\{\frac{\exp\{|\Im k|\xi\}}{k^{l}(k\xi)}\}.
\end{equation}
That is,
\begin{equation}\label{4.6}
k^ly_l(\xi;k)=\hat{R}\cos\{k(\xi-\hat{R})-l\frac{\pi}{2}\}
[1+O\{\frac{1}{k\xi}\}],\,\hat{R}<\xi<\infty,
\end{equation}
outside the zeros of $\cos\{k(\xi-\hat{R})-l\frac{\pi}{2}\}$. This is classic in Sturm-Liouville theory \cite{Carlson,Carlson2,Po}.

\par
The given eigenvalue $k_0$ satisfies~(\ref{15}), for all $l\geq0$ and all $\hat{x}\in\mathbb{S}^2$, and~(\ref{4.6}). Therefore,
\begin{equation}
k_0^ly_l(\xi;k_0)=\hat{R}\cos\{k_0(\xi-\hat{R})-l\frac{\pi}{2}\}
[1+O\{\frac{1}{k_0\xi}\}],\,0<\xi<\infty.
\end{equation}
We choose $l\uparrow\infty$ and so $\xi\uparrow\infty$ such that $\xi=\hat{R}+\frac{l\pi}{2k_0}>R_0$ for any large $R_0$.
Thus, for large $l$,
\begin{equation}
k_0^ly_l(\xi;k_0)=\hat{R}+O(\frac{1}{\xi}),\,
\xi=\hat{R}+\frac{l\pi}{2k_0},\,|k_0|\geq1.
\end{equation}
Using~(\ref{118}),~(\ref{1.2}) and the uniqueness of the Helmholtz equation,
as shown in~(\ref{43}), the far-field patterns \cite[(2.49)]{Colton2} are asymptotically the same periodic functions for each $\hat{x}\in\mathbb{S}^2$. In particular, the boundary defining function $\hat{R}$ is constant to $\hat{x}\in\mathbb{S}^2$, and Theorem \ref{11} is thus proven.
\end{proof}

\begin{compliance}
The author declares there is no conflicts of interest regarding the publication of this paper.
The research does not involve any human participant and/or animals, and no further informed consent is required.
\end{compliance}

\end{document}